\documentclass[12pt]{article}
\usepackage[a4paper,width=6.8in,height=8.8in]{geometry}
\usepackage{amsmath,amssymb}
\usepackage[hidelinks]{hyperref}
\usepackage{setspace}
\usepackage[utf8]{inputenc}
\usepackage[english]{babel}
\usepackage{booktabs,multirow}
\usepackage{blkarray}
\usepackage{lscape}
\usepackage[T1]{fontenc}
\usepackage{authblk,color}
\usepackage[english]{babel}
\usepackage{amsfonts, amstext, amsthm, booktabs, dcolumn}
\usepackage{graphicx}
\usepackage{float}
\usepackage{caption}
\usepackage{centernot}
\usepackage{subcaption}
\usepackage{times}
\usepackage[nottoc]{tocbibind}
\usepackage[numbers]{natbib}
\usepackage{mathrsfs}

\definecolor{gr}{rgb}{0.7, 0.0, 0.49}

\newtheorem{proposition}{\bf Proposition}[section]
\newtheorem{theorem}{\bf Theorem}[section]

\newtheorem{corollary}{\bf Corollary}[section]
\newtheorem{remark}{\bf Remark}[section]
\newtheorem{remarks}{\bf Remarks}[section]
\newtheorem{lemma}{\bf Lemma}[section]

\newtheorem{example}{\bf Example}[section]

\newcommand\floor[1]{\left\lfloor#1\right\rfloor}

\numberwithin{equation}{section}
\allowdisplaybreaks
\onehalfspacing

\title{Approximations Related to the Sums of $m$-dependent Random Variables}
\author[1]{Amit N. Kumar}
\author[2]{Neelesh S. Upadhye}
\author[3]{P. Vellaisamy}
\affil[1,3]{\small Department of Mathematics, Indian Institute of Technology Bombay,}
\affil[ ]{\small Powai, Mumbai-400076, India.}
\affil[2]{\small Department of Mathematics, Indian Institute of Technology Madras,}
\affil[ ]{\small Chennai-600036, India.}
\affil[1]{\small Email: amit.kumar2703@gmail.com}
\affil[2]{\small Email: neelesh@iitm.ac.in}
\affil[3]{\small Email: pv@math.iitb.ac.in}
\date{}

\begin{document}
\maketitle

\begin{abstract}
\noindent
In this paper, we consider the sums of non-negative integer valued $m$-dependent random variables, and its approximation to the power series distribution. We first discuss some relevant results for power series distribution such as Stein operator, uniform and non-uniform bounds on the solution of Stein equation, and etc. Using Stein's method, we obtain the error bounds for the approximation problem considered. As special cases, we discuss two applications, namely, $2$-runs and $(k_1,k_2)$-runs and compare the bound with the existing bounds.
\end{abstract}

\noindent
\begin{keywords}
Power series distribution; $m$-dependent random variables; Stein's method; Runs.
\end{keywords}\\
{\bf MSC 2010 Subject Classifications :} Primary : 62E17, 62E20 ; Secondary :  60F05, 60E05.

\section{Introduction and Preliminaries}
The sums of $m$-dependent random variables (rvs) has special attention due to its applicability in many real-life applications such as runs and patterns, DNA sequences, and reliability theory, among many others. However, its distribution is difficult or sometimes intractable, especially if the setup is arising from non-identical rvs concentrated on $\mathbb{Z}_+=\{0,1,2,\ldots\}$, the set of non-negative integers. Therefore, there is a need to approximate such a distribution with some known and easy-to-use distributions. In this article, we consider power series distribution (PSD) approximation to the sums of $m$-dependent rvs. Approximations related to the sums of locally dependent rvs have been studied by several authors such as Barbour and Xia \cite{BX,BX2001}, Fu and Johnson \cite{FJ2009}, Vellaisamy \cite{V}, Wang and Xia \cite{WX}, and Soon \cite{soon}, among many others.\\
A sequence of rvs $\{X_k\}_{k\ge 1}$ is called $m$-dependent if $\sigma(X_1,X_2,\dotsc,X_i)$ and $\sigma(X_j,X_{j+1},\dotsc)$, for $j-i> m$, are independent, where $\sigma(X)$ denotes the sigma-algebra generated by $X$. The sums of $m$-dependent rvs can be reduced to the sums of $1$-dependent rvs, using rearrangement of rvs (see Section \ref{7:AR} for details). We mainly focus on the sums of $1$-dependent rvs concentrated on $\mathbb{Z}_+$, and obtain the error bounds. Of course,  the bound can directly apply for special distributions of PSD family. An advantage of approximation to PSD family is that we can obtain the error bounds for approximation to some specific distributions such as Poisson and negative binomial distributions. For some related works, we refer the reader to Lin and Liu \cite{LL2012}, \v{C}ekanavi\v{c}ius and Vellaisamy \cite{CV2015}, and references therein.\\
For $\mathbb{Z}_+$-valued rvs $X$ and $X^*$, the total variation distance is given by
\begin{align}
d_{TV}(X,X^*)=\frac{1}{2}\sum_{k=0}^{\infty}|\mathbb{P}(X=k)-\mathbb{P}(X^*=k)|.\label{dtv1}
\end{align} 
Hereafter, ${\bf 1}_A$ denotes the indicator function of $A\subseteq \mathbb{Z}_+$. Let $X$ be a rv concentrated on $\mathbb{Z}_+$, 
\begin{align*}
\mathcal{G}=\{f:\mathbb{Z}_+\to\mathbb{R}~|~\text{$f$ is bounded}\}
\end{align*} 
and
\begin{equation}
\mathcal{G}_X=\{g \in \mathcal{G}~|~g(0)=0~\text{and}~g(x)=0~\text{for}~x \notin \text{supp}(X)\},\label{7:g}
\end{equation}
associated with the rv $X$, where supp($X$) denotes the support of the rv $X$. We now briefly discuss Stein's method (Stein \cite{stein}) which we use to derive our approximation results in Section \ref{7:AR}. The Stein's method can be discussed in following three steps.
\begin{enumerate}
\item[(a)] Identify a Stein operator, denoted by $\mathcal{A}_X$ for a rv $X$, such that $\mathbb{E}[\mathcal{A}_Xg(X)]=0$, for $g\in \mathcal{G}_X$.
\item[(b)] Solve the Stein equation ${\cal A}_Xg(k)=f(k)-{\mathbb E}f(X)$, for $f \in {\cal G}$ and $g \in {\cal G}_X$.
\item[(c)] Replace $k$ by a rv $Y$ in Stein equation, and taking expectation and supremum to get
\begin{equation*}
d_{TV}(X,Y):=\sup_{f \in {\cal H}}|{\mathbb E}f(X)-{\mathbb E}f(Y)|=\sup_{f \in {\cal H}}|{\mathbb E}{\cal A}_Xg_f(Y)|,
\end{equation*}
where $g_f$ is the solution of the Stein equation and ${\cal H}=\{{\bf 1}_A| A \subseteq {\mathbb Z}_+\}$.
\end{enumerate} 
For additional details on Stein's method, see Barbour {\em et al.} \cite{BHJ}, Barbour and Chen \cite{BC1}, Ley {\em et al.} \cite{LRS}, Reinert \cite{R1}, Upadhye {\em et al.} \cite{UCV}, and the references therein.

\noindent
This article is organized as follows. In Section \ref{7:psdRR}, we discuss the PSD and its related results to Stein's method. In Section \ref{7:AR}, we derive the error bound for PSD approximation to the sums of $1$-dependent rvs and discuss some relevant remarks. In Section \ref{7:APTK1K2R}, we discuss two important applications of our results to the sums of $2$-runs and $(k_1,k_2)$-runs.

\section{Power Series Distribution and Related Results}\label{7:psdRR}
Let $Z$ be a $\mathbb{Z}_+$-values rv. We say its distribution belongs to the PSD family, denoted by $\mathcal{P}$, if $\mathbb{P}(Z=k)=p_k$ is of the form 
\begin{equation}
p_k=\frac{a_k \theta^k}{\gamma(\theta)}, \quad k\in\mathbb{Z}_+,\label{7:psdpmf}
\end{equation}
where $\theta>0$ and $a_k$, $k \ge 0$, are called series parameter and coefficient function, respectively. Many distributions such as Poisson, binomial, negative binomial, logarithmic series, and inverse sine distributions, among many others, belong to the PSD family. For more details, we refer the reader to Edwin \cite{ETK2014}, Noack \cite{noack}, Patil \cite{patil}, and the references therein. \\
Next, we give a brief discussion about Stein's method for PSD, in fact, many results follow from Eichelsbacher and Reinert \cite{ER}. The following proposition gives a Stein operator for PSD.

\begin{proposition}
Let the rv $Z$ having distribution belonging to PSD family defined in \eqref{7:psdpmf}. Then a Stein operator for $Z$ is given by
\begin{equation}
{\cal A}_Zg(k)=\theta (k+1)\frac{a_{k+1}}{a_k}g(k+1)-k g(k),\quad g\in\mathcal{G}_Z,~k \in \mathbb{Z}_+.\label{7:sopsd}
\end{equation}
\end{proposition}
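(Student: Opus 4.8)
The plan is to verify directly the defining property of a Stein operator stated in step (a), namely that $\mathbb{E}[\mathcal{A}_Z g(Z)] = 0$ for every $g \in \mathcal{G}_Z$. The starting point is the observation that the ratio of consecutive probabilities in \eqref{7:psdpmf} has a particularly simple form: writing $p_k = a_k\theta^k/\gamma(\theta)$, one finds $p_{k+1}/p_k = \theta\,a_{k+1}/a_k$, so the coefficient appearing in \eqref{7:sopsd} is exactly $\theta(k+1)a_{k+1}/a_k = (k+1)p_{k+1}/p_k$. This rewrites the operator in the ``density'' form underlying the Eichelsbacher--Reinert construction, which is what makes the expectation collapse.

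Concretely, I would set $h(k) = k\,g(k)$ and note that, by the previous identity,
\[
\mathcal{A}_Z g(k) = (k+1)\frac{p_{k+1}}{p_k}\,g(k+1) - k\,g(k) = \frac{p_{k+1}}{p_k}\,h(k+1) - h(k).
\]
Taking expectations against the pmf $p_k$ then cancels the awkward ratio against the weight, leaving a telescoping series:
\[
\mathbb{E}[\mathcal{A}_Z g(Z)] = \sum_{k=0}^{\infty}\Big(\frac{p_{k+1}}{p_k}h(k+1) - h(k)\Big)p_k = \sum_{k=0}^{\infty}\big(p_{k+1}h(k+1) - p_k h(k)\big).
\]
The $N$-th partial sum equals $p_{N+1}h(N+1) - p_0 h(0)$, and the lower endpoint vanishes automatically since $h(0) = 0\cdot g(0) = 0$ by the constraint $g(0)=0$ built into the definition \eqref{7:g} of $\mathcal{G}_Z$.

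The main technical point, and the step I expect to require the most care, is the justification that the telescope converges to $0$: one must show the tail term $p_{N+1}h(N+1) = (N+1)g(N+1)p_{N+1}$ tends to $0$ as $N\to\infty$, and simultaneously that the interchange/reindexing producing the telescope is legitimate. Both follow once $\sum_{k\ge 0} k\,|g(k)|\,p_k < \infty$; since $g$ is bounded this reduces to finiteness of $\mathbb{E}[Z] = \theta\gamma'(\theta)/\gamma(\theta)$, which holds on the region where $\gamma$ is analytic. Under this absolute-convergence guarantee the general term of a convergent series vanishes, so $p_{N+1}h(N+1)\to 0$ and the partial sums converge to $0-0=0$, yielding $\mathbb{E}[\mathcal{A}_Z g(Z)] = 0$. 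I would also remark that the support condition in \eqref{7:g} keeps $h$ supported where $p_k>0$, so the ratio $p_{k+1}/p_k$ is never evaluated at a vanishing denominator.
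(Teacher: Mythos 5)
Your proof is correct and is essentially the same argument as the paper's: both rest on the identity $\theta(k+1)\frac{a_{k+1}}{a_k}p_k=(k+1)p_{k+1}$, and your telescoping of $p_{k+1}h(k+1)-p_kh(k)$ is exactly the paper's ``rearranging the terms'' after multiplying by $g(k+1)$ and summing. The only difference is that you explicitly justify the vanishing of the boundary term $(N+1)g(N+1)p_{N+1}$ via $\mathbb{E}(Z)<\infty$, a point the paper leaves implicit.
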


\begin{proof} From \eqref{7:psdpmf}, it can be easily verified that
\begin{equation}
\theta (k+1)\frac{a_{k+1}}{a_k}p_k-(k+1)p_{k+1}=0.\label{7:dipmf}
\end{equation}
Let $g \in \mathcal{G}_Z$ defined in \eqref{7:g}, then 
$$\sum_{k=0}^{\infty}g(k+1)\left[\theta (k+1) \frac{a_{k+1}}{a_k}p_k-(k+1)p_{k+1}\right]=0.$$
Rearranging the terms, we have
$$\sum_{k=0}^{\infty}\left[\theta (k+1)\frac{a_{k+1}}{a_k}g(k+1)-k g(k)\right]p_k=0.$$
This proves the result.
\end{proof}

\noindent
Now, we discuss the solution of the Stein equation
\begin{align}
\theta (k+1)\frac{a_{k+1}}{a_k}g(k+1)-k g(k)=f(k)-\mathbb{E}f(Z),\quad f \in \mathcal{G},~g\in\mathcal{G}_Z.\label{7:se}
\end{align} 
Next we describe discrete Gibbs measure (DGM), a large class of distributions, studied by Eichelsbacher and Reinert \cite{ER}. If a rv $U$ has the distribution of the form
\begin{align}
\mathbb{P}(U=k)=\frac{1}{{\cal C}_w}e^{V(k)}\frac{w^k}{k!},\quad k \in \mathbb{Z}_+,\label{7:DGM}
\end{align}
for some function $V:\mathbb{Z}_+\to \mathbb{R}$, $w>0$, and ${\cal C}_w=\sum_{k=0}^{\infty}e^{V(k)}\frac{w^k}{k!}$, then we say the rv $U$ belongs to the DGM family. Observe here the support is $\mathbb{Z}_+$. Note that if we take $a_k=e^{V(k)}/k!\iff V(k)=\ln(a_k k!)$, $\theta=w$, and $\gamma(\theta)={\cal C}_w$, which are valid choices, then the results derived by Eichelsbacher and Reinert \cite{ER} are valid for PSD family. Therefore, the solution of \eqref{7:se} can be directly obtained from $(2.5)$ and $(2.6)$ of Eichelsbacher and Reinert \cite{ER} and is given by
\begin{align*}
g(k)&=\frac{1}{k a_k \theta^k}\sum_{j=0}^{k-1}a_j \theta^j[f(j)-\mathbb{E}f(Z)]\\
&=-\frac{1}{k a_k \theta^k}\sum_{j=k}^{\infty}a_j \theta^j[f(j)-\mathbb{E}f(Z)].
\end{align*}
Also, the Lemma $2.1$ of Eichelsbacher and Reinert \cite{ER} can be written for PSD family in the following manner.

\begin{lemma}
Let ${\cal G}_1=\{f:\mathbb{Z}_+\to[0,1]\}$, $F(k)=\sum_{i=0}^{k}p_i$ and $\bar{F}(k)=\sum_{i=k}^{\infty}p_i$. Assume that
\begin{align*}
k \frac{F(k)}{F(k-1)}\ge \theta (k+1)\frac{a_{k+1}}{a_k}\ge k \frac{\bar{F}(k+1)}{\bar{F}(k)}.
\end{align*}
Then, for $f \in \mathcal{G}_1$ and $g_f$, the solution of \eqref{7:se}, we have
\begin{align*}
\sup_{f \in \mathcal{B}}|\Delta g_f (k)|=\frac{a_k}{\theta (k+1)a_{k+1}}\bar{F}(k+1)+\frac{1}{k}F(k-1),
\end{align*}
where $\Delta g_f(k)=g_f(k+1)-g_f(k)$.\\
Moreover, 
\begin{align}
\sup_{f \in \mathcal{B}}|\Delta g_f (k)|\le \frac{1}{k} \wedge \frac{a_k}{\theta (k+1)a_{k+1}},\label{7:bd}
\end{align}
where $x\wedge y$ denotes the minimum of $x$ and $y$.
\end{lemma}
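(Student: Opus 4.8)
The plan is to start from the closed-form solution $g_f$ of the Stein equation \eqref{7:se} recorded just above the statement, and to compute $\Delta g_f(k)=g_f(k+1)-g_f(k)$ by using the \emph{backward} representation for $g_f(k+1)$ together with the \emph{forward} representation for $g_f(k)$. This particular pairing is the crux: it isolates an upper-tail contribution, producing the factor $\bar F(k+1)$, and a lower-tail contribution, producing $F(k-1)$, which is exactly the two-term shape of the asserted identity. Writing $a_j\theta^j=\gamma(\theta)p_j$ turns the prefactors of the two sums into $1/((k+1)p_{k+1})$ and $1/(kp_k)$, and throughout I would use the identity $\theta(k+1)a_{k+1}/a_k=(k+1)p_{k+1}/p_k$ so that the hypotheses can be read off directly.

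Next I would reduce the supremum to a linear optimisation. Since $\Delta g_f(k)$ is affine in $f$ and the extreme points of $\mathcal G_1$ are the indicators, it suffices to take $f=\mathbf 1_A$ (equivalently, to work over $\mathcal B=\{\mathbf 1_A:A\subseteq\mathbb Z_+\}$), for which $f(j)-\mathbb E f(Z)=x_j-p_A$ with $x_j=\mathbf 1_A(j)$ and $p_A=\mathbb P(Z\in A)=\sum_j p_j x_j$. Substituting and regrouping, $\Delta g_f(k)$ becomes a linear functional of the memberships $\{x_j\}$; collecting the points $j\le k-1$, the point $j=k$, and the points $j\ge k+1$ yields three coefficients, say $\kappa_L,\kappa_M,\kappa_R$, so that the maximiser includes each block precisely when its coefficient is positive.

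The heart of the argument is the sign analysis of these coefficients, and here the hypotheses enter. One checks that $\kappa_M>0$ unconditionally, while a short rearrangement shows that $\kappa_L\le 0$ is equivalent to $k\bar F(k+1)/\bar F(k)\le\theta(k+1)a_{k+1}/a_k$ and that $\kappa_R\le 0$ is equivalent to $\theta(k+1)a_{k+1}/a_k\le kF(k)/F(k-1)$, that is, precisely the two inequalities assumed in the statement. Hence the maximiser is $A=\{k\}$, giving $\max_A\Delta g_f(k)=\frac{a_k}{\theta(k+1)a_{k+1}}\bar F(k+1)+\frac1k F(k-1)$; the same sign data identify $A=\mathbb Z_+\setminus\{k\}$ as the minimiser, and a one-line telescoping (using $\bar F(k+1)-\bar F(k)=-p_k=F(k-1)-F(k)$) shows $\min_A\Delta g_f(k)$ equals the negative of this, so $\sup_{f\in\mathcal B}|\Delta g_f(k)|$ is exactly the claimed expression.

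Finally, the \textbf{Moreover} bound follows by feeding the two hypotheses back into the exact formula. To obtain the bound by $1/k$ I would show $\frac{a_k}{\theta(k+1)a_{k+1}}\bar F(k+1)\le\frac1k\bar F(k)$, which rearranges to the right-hand hypothesis; to obtain the bound by $\frac{a_k}{\theta(k+1)a_{k+1}}$ I would show $\frac1k F(k-1)\le\frac{a_k}{\theta(k+1)a_{k+1}}F(k)$, which rearranges to the left-hand hypothesis. Taking the minimum of the two gives \eqref{7:bd}. The main obstacle I anticipate is purely organisational: choosing the forward/backward pairing and the correct block decomposition so that the three sign conditions align cleanly with the stated monotonicity assumptions; once that bookkeeping is in place, every remaining step is a routine rearrangement.
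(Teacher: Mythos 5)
Your argument is correct, and the computations check out: writing $a_j\theta^j=\gamma(\theta)p_j$, $\alpha=1/((k+1)p_{k+1})$ and $\beta=1/(kp_k)$, the forward/backward pairing gives block coefficients $\kappa_L=\alpha\bar F(k+1)-\beta\bar F(k)$, $\kappa_M=\alpha\bar F(k+1)+\beta F(k-1)>0$ and $\kappa_R=\beta F(k-1)-\alpha F(k)$, and the nonpositivity of $\kappa_L$ and $\kappa_R$ is exactly the pair of assumed inequalities; $A=\{k\}$ then yields the stated value and $A=\mathbb{Z}_+\setminus\{k\}$ its negative, while the two bounds in the \emph{Moreover} part rearrange to the same two hypotheses. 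This is, however, a genuinely different route from the paper's: the paper gives no proof of its own, but instead observes that a power series distribution is a discrete Gibbs measure under the identification $a_k=e^{V(k)}/k!$, $\theta=w$, $\gamma(\theta)=\mathcal{C}_w$, and transcribes Lemma $2.1$ of Eichelsbacher and Reinert in this parametrization. That transfer is shorter but leans on an external result and is literally valid only when $a_k>0$ for all $k\in\mathbb{Z}_+$ (the DGM family as stated has full support), so finite-support members of $\mathcal{P}$ such as the binomial require an extra word; your direct optimisation is self-contained, works for any support, and makes transparent where each monotonicity hypothesis enters (one kills the lower block, the other the upper block). One cosmetic point: the set $\mathcal{B}$ in the statement is never defined in the paper, and your reduction from $\mathcal{G}_1$ to indicators via the extreme points of an affine functional is the correct way to read it.
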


\noindent
Now, it is not easy to use direct form of the Stein operator \eqref{7:sopsd} as $a_k$ is unknown and depends on $k$. So, we consider PSD family with Panjer's recursive relation (see Panjer and Wang \cite{PW} for details), denoted by $\mathcal{P}_1$, which is given by
\begin{equation}
(k+1)\frac{p_{k+1}}{p_k}=a+bk \implies \theta (k+1)\frac{a_{k+1}}{a_k}=a+bk,\quad\text{for~some}~a,b\in {\mathbb R}.\label{7:panj}
\end{equation}
Therefore, the stein operator \eqref{7:sopsd} can be written as
\begin{equation}
{\cal A}_Zg(k)=(a+bk)g(k+1)-k g(k), \quad k \in \mathbb{Z}_+.\label{7:pso}
\end{equation}
Also, the bound \eqref{7:bd} becomes
\begin{align}
\sup_{f \in \mathcal{B}}|\Delta g_f (k)|\le \frac{1}{k} \wedge \frac{1}{a+bk},\quad k\ge 1.\label{7:bdd}
\end{align}
Note that if $a,b\ge 0$ (PSD family satisfies Panjer recursive relation with $a,b\ge 0$, denoted by $\mathcal{P}_2$) then the bound \eqref{7:bdd} becomes uniform and is given by
\begin{align}
\sup_{f \in \mathcal{B}}|\Delta g_f (k)|\le 1 \wedge \frac{1}{a},\quad k\ge 1.\label{7:bddd}
\end{align}
Note that $\mathcal{P}_2\subset \mathcal{P}_1\subset \mathcal{P}$. Also, observe that $a=\lambda,b=0$ $\big(a_k=1/k!$, $\theta=\lambda$ and $\gamma(\theta)=e^{\theta}\big)$ and $a=nq,b=q$ $\big(a_k=\binom{n+k-1}{k}$, $\theta=q$ and $\gamma(\theta)=(1-\theta)^{-n}\big)$ for Poisson (with parameter $\lambda$) and negative binomial (with parameter $n$ and $p=1-q$) distributions, respectively, and hence the bounds (from \eqref{7:bddd}) are $1\wedge \frac{1}{\lambda}$ and $1\wedge \frac{1}{nq}$, respectively, which are well-known bounds for Poisson and negative binomial distributions. Many distributions satisfy the condition $a,b\ge 0$. However, if the condition is not satisfied, one can still use \eqref{7:bdd} to compute the uniform bound. For example, if $a_k=\binom{n}{k}$, $\theta=p/q$, and $\gamma(\theta)=(1+\theta)^n$, then $Z\sim \text{Bi}(n,p)$, and $\frac{\theta (k+1)a_{k+1}}{a_k}=\frac{p}{q}(n-k)$, and hence $a=np/q$ and $b=-p/q\le 0$. Therefore, the bound \eqref{7:bdd} is 
\begin{align}
\sup_{f \in \mathcal{B}}|\Delta g_f (k)|&\le \frac{1}{k} \wedge \frac{q}{p(n-k)}\nonumber\\
&=\left\{\begin{array}{ll}
\frac{1}{k} & \text{if $k \ge np$}\nonumber\\
\frac{q}{(n-k)p} & \text{if $k \le np$}
\end{array}\right.\nonumber\\
&\le\left\{\begin{array}{ll}
\frac{1}{np} & \text{if $k \ge np$}\nonumber\\
\frac{1}{np} & \text{if $k \le np$}
\end{array}\right.\\
&=\frac{1}{np},~\text{for all $k$},\label{7:bibd}
\end{align}
which leads to a uniform bound for binomial distribution. Note here that the Stein operator (from \eqref{7:pso}) is 
\begin{equation}
{\cal A}_Zg(k)=\frac{p}{q}(n-k)g(k+1)-kg(k).\label{7:oso}
\end{equation} 
But, the well-known Stein operator for the binomial distribution is 
\begin{equation}
{\cal A}_Zg(k)=p(n-k)g(k+1)-qkg(k),\label{7:sso}
\end{equation} 
which follows by multiplying $q$ in \eqref{7:oso}. Also, the uniform bound will be changed and is given by $1/npq$ (that is, divided by $q$), which is well-known bound with respect to the Stein operator \eqref{7:sso} (see Upadhye {\em et al.} \cite{UCV}). Hence, throughout this article, we use $\|\Delta g\|=\sup_k |\Delta g(k)|$ and the uniform bound for $\|\Delta g\|$ can be obtained from \eqref{7:bddd} or may be computed explicitly for some applications. 

\noindent
Next, let $\phi_Z(\cdot)$ be the probability generating function of $Z$. Then, using \eqref{7:panj}, it can be seen that
$$\phi_Z^\prime(t)=\frac{a \phi_Z(t)}{1-bt}.$$
Hence, mean and variance of the PSD are given by
\begin{equation}
{\mathbb E}(Z)=\frac{a}{1-b}\quad\text{and}\quad \mathrm{Var}(Z)=\frac{a}{(1-b)^2}.\label{7:mv}
\end{equation}
For more details, we refer the reader to Edwin \cite{ETK2014}, and Panjer and Wang \cite{PW}.

\section{Approximation Results}\label{7:AR}
Let $Y_1,Y_2,\ldots,Y_n$ be a sequence of $\mathbb{Z}_+$-valued $m$ dependent rvs and $S_n=\sum_{i=1}^{n}Y_i$, the sums of $m$-dependent rvs. Then, grouping the consecutive summations in the following form   
\begin{align*}
Y_i^*:=\sum_{j=(i-1)m+1}^{\min(im,n)}Y_j,\quad j=1,2,\ldots,\floor{n/m}+1,
\end{align*}
where $\floor{x}$ denotes the greatest integer function of $x$, $S_n=\sum_{j=1}^{\floor{n/m}+1}Y_j^*$ become the sums of $1$-dependent rvs. In this section, we derive an error bound for PSD approximation to the sums of $1$-dependent rvs in total variation distance and discuss some relevant remarks. Throughout this section, we assume $X_1,X_2,\dotsc,X_n$, $n\ge 1$, is a sequence of $1$-dependent rvs and
\begin{equation}
W_n=\sum_{i=1}^{n}X_i.\label{7:W}
\end{equation}
For any $\mathbb{Z}_+$-valued rv $Y$, let $D(Y):=2d_{TV}(Y,Y+1)$, where $d_{TV}(X,X^*)$ as defined in \eqref{dtv1}. Let 
\begin{align*}
N_{i,\ell}:=\{j:|j-i|\le \ell\}\cap \{1,2,\dotsc,n\}\quad \text{and} \quad X_{N_{i,\ell}}:=\sum_{j \in N_{i,\ell}}X_j,\quad\text{for}~\ell=1,2.
\end{align*}
Note that $X_{N_{i,2}}-X_{N_{i,1}}=X_{N_{i,2}-N_{i,1}}$. From \eqref{7:W}, it can be verified that ${\mathbb E}(W_n)=\sum_{i=1}^{n}{\mathbb E}(X_i)$ and
\begin{align}
\mathrm{Var}(W_n)&=\sum_{i=1}^{n}\sum_{|j-i|\le 1}\left[{\mathbb E}(X_i X_j)-{\mathbb E}(X_i){\mathbb E}(X_j)\right]=\sum_{i=1}^{n}\left[{\mathbb E}(X_i X_{N_{i,1}})-{\mathbb E}(X_i){\mathbb E}(X_{N_{i,1}})\right].\label{7:vari}
\end{align}

\noindent
Now, the following theorem gives the error bound for $Z$-approximation to $W_n$.

\begin{theorem}\label{7:th1}
Let $Z\in \mathcal{P}_1$ and $W_n$ be defined as in \eqref{7:W}. Assume that ${\mathbb E}(Z)={\mathbb E}(W_n)$, and $\tau=\mathrm{Var}(W_n)-\mathrm{Var}(Z)$. Then, for $n\ge 6$,
\begin{align}
d_{TV}(W_n, Z)&\le \|\Delta g\|\Bigg\{\frac{|1-b|}{2}\Bigg[\sum_{i=1}^{n}\mathbb{E}(X_i)\mathbb{E}[X_{N_{i,1}}(2 X_{N_{i,2}}-X_{N_{i,1}}-1)D(W_n|X_{N_{i,1}},X_{N_{i,2}})]\nonumber\\
& ~~~~~~~~~~~~~~~~~~~~+ \sum_{i=1}^{n}\mathbb{E}[X_iX_{N_{i,1}}(2 X_{N_{i,2}}-X_{N_{i,1}}-1)D(W_n|X_{N_{i,1}},X_{N_{i,2}})]\Bigg]\nonumber\\
& ~~~~~~~~~~~~~~~~~~~~+\sum_{i=1}^{n}\mathbb{E}[X_i (X_{N_{i,2}}-1)D(W_n|N_{i,2})]+|\tau(1-b)|\Bigg\}.\label{7:mainrs}
\end{align}
\end{theorem}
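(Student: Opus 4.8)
The plan is to run Stein's method with the operator $\mathcal{A}_Z$ from \eqref{7:pso}. By step (c) together with the Stein equation \eqref{7:se}, for every $f\in\mathcal{H}$ one has $\mathbb{E}f(W_n)-\mathbb{E}f(Z)=\mathbb{E}[\mathcal{A}_Z g_f(W_n)]$, so that
\[
d_{TV}(W_n,Z)=\sup_{f\in\mathcal H}\big|\mathbb{E}[(a+bW_n)g_f(W_n+1)]-\mathbb{E}[W_n g_f(W_n)]\big|,
\]
and by \eqref{7:bddd} (or the explicitly computed value of $\|\Delta g\|$) the factor $\|\Delta g\|$ may be pulled out at the end. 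Writing $g=g_f$, $h=\Delta g$, and using $a=(1-b)\mathbb{E}(W_n)$ (which holds since $\mathbb{E}Z=\mathbb{E}W_n$ and $\mathbb{E}Z=a/(1-b)$ by \eqref{7:mv}), a short rearrangement gives the key identity
\[
\mathbb{E}[\mathcal{A}_Z g(W_n)]=\mathbb{E}[W_n\,h(W_n)]-(1-b)\,\mathrm{Cov}\big(W_n,g(W_n+1)\big).
\]
This already isolates the two scalars $1$ and $1-b$ sitting in front of the three groups of terms in \eqref{7:mainrs}.

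Next I would exploit $1$-dependence. Setting $V_i=W_n-X_{N_{i,1}}=\sum_{j\notin N_{i,1}}X_j$, the definition of $1$-dependence yields $X_i\perp V_i$, and similarly $X_{N_{i,1}}\perp(W_n-X_{N_{i,2}})$. Expanding $\mathbb{E}[W_n h(W_n)]=\sum_i\mathbb{E}[X_i h(W_n)]$ and $\mathrm{Cov}(W_n,g(W_n+1))=\sum_i\mathrm{Cov}(X_i,g(W_n+1))$, I would subtract from each summand its value at the $X_i$-independent point $V_i$ (whose covariance with $X_i$ vanishes) and telescope, using $g(W_n+1)-g(V_i+1)=\sum_{l=0}^{X_{N_{i,1}}-1}h(V_i+1+l)$ and a second telescoping of $h=\Delta g$ into $\Delta^2 g$. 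The zeroth-order pieces reproduce, through \eqref{7:vari}, the covariance structure $\sum_i\mathrm{Cov}(X_i,X_{N_{i,1}})=\mathrm{Var}(W_n)$, while $\mathbb{E}[W_n h(W_n)]$ contributes $\mathbb{E}(W_n)\,\mathbb{E}h(W_n)=(1-b)\mathrm{Var}(Z)\,\mathbb{E}h(W_n)$ via \eqref{7:mv}.

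Collecting the leading contributions, the two variance parts combine into $-(1-b)\big(\mathrm{Var}(W_n)-\mathrm{Var}(Z)\big)\mathbb{E}[\Delta g(W_n)]=-(1-b)\tau\,\mathbb{E}[\Delta g(W_n)]$, which is bounded in modulus by $\|\Delta g\|\,|\tau(1-b)|$ and yields the last term of \eqref{7:mainrs}. The first-difference remainders, after conditioning on the block $\{X_j:j\in N_{i,2}\}$, are controlled by $|\mathbb{E}[\Delta g(\cdot)\mid N_{i,2}]|\le\|\Delta g\|\,D(W_n\mid N_{i,2})$ and carry the count $X_i(X_{N_{i,2}}-1)$, producing the third sum. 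The genuinely second-order $\Delta^2 g$ remainders, conditioned on $X_{N_{i,1}}$ and $X_{N_{i,2}}$, are bounded by $\|\Delta g\|\,D(W_n\mid X_{N_{i,1}},X_{N_{i,2}})$; counting the telescoped pairs (one increment in $N_{i,1}$, one anywhere in $N_{i,2}$) gives exactly $2X_{N_{i,1}}(X_{N_{i,2}}-X_{N_{i,1}})+X_{N_{i,1}}(X_{N_{i,1}}-1)=X_{N_{i,1}}(2X_{N_{i,2}}-X_{N_{i,1}}-1)$, and splitting according to the independent factor $\mathbb{E}X_i$ versus the correlated factor $X_i$ yields the first two sums with prefactor $\tfrac{|1-b|}{2}$.

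The main obstacle is the dependence bookkeeping. Unlike $X_{N_{i,1}}$, the larger block $X_{N_{i,2}}$ is \emph{not} independent of the far sum $W_n-X_{N_{i,2}}$ (for instance $X_{i+2}$ is $1$-dependent with $X_{i+3}$), so the differences of $g$ cannot be smoothed against a cleanly independent remainder; this is precisely why the estimates must be routed through the conditional quantities $D(W_n\mid N_{i,2})$ and $D(W_n\mid X_{N_{i,1}},X_{N_{i,2}})$ rather than an unconditional $D(W_n)$. Keeping the first- and second-order remainders separate, tracking their exact combinatorial multiplicities, and making rigorous the repeated use of $|\mathbb{E}h(Y)-\mathbb{E}h(Y+1)|\le\|h\|\,D(Y)$ under conditioning constitute the technical heart of the argument; the hypothesis $n\ge 6$ ensures that the neighbourhoods $N_{i,1}\subset N_{i,2}$ and their complements are non-degenerate for every $i$, so that these independence separations hold uniformly.
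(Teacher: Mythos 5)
Your proposal is correct and follows essentially the same route as the paper's proof: the same first-moment-matching rewrite of $\mathbb{E}[\mathcal{A}_Zg(W_n)]$ into a covariance term plus $\mathbb{E}[W_n\Delta g(W_n)]$, the same neighbourhood decompositions $W_n-X_{N_{i,1}}$ and $W_n-X_{N_{i,2}}$ with double telescoping into $\Delta^2 g$, the same cancellation of the first-order terms against the variance identity producing the $|\tau(1-b)|$ term, and the same conditional smoothing bounds via $D(W_n\mid X_{N_{i,1}},X_{N_{i,2}})$ and $D(W_n\mid X_{N_{i,2}})$ with matching combinatorial counts $X_{N_{i,1}}(2X_{N_{i,2}}-X_{N_{i,1}}-1)/2$ and $X_i(X_{N_{i,2}}-1)$. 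No substantive differences to report.
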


\begin{proof} Consider the Stein operator given in \eqref{7:pso} and taking expectation with respect to $W_n$, we have
\begin{align*}
{\mathbb E}\left[{\cal A}_Zg(W_n)\right]&=a{\mathbb E}[g(W_n+1)] +b{\mathbb E}[W_ng(W_n+1)]-{\mathbb E}[W_n g(W_n)]\\
&=a{\mathbb E}[g(W_n+1)] -(1-b){\mathbb E}[W_ng(W_n+1)]+{\mathbb E}[W_n \Delta g(W_n)]\\
&=(1-b)\left[\frac{a}{(1-b)}{\mathbb E}[g(W_n+1)] -{\mathbb E}[W_ng(W_n+1)]\right]+{\mathbb E}[W_n \Delta g(W_n)].
\end{align*}
Applying the first moment matching condition, ${\mathbb E}(Z)=a/(1-b)={\mathbb E}(W_n)$, we get
\begin{equation}
{\mathbb E}\left[{\cal A}_Zg(W_n)\right]=(1-b)\Big[{\mathbb E}(W_n){\mathbb E}[g(W_n+1)] -{\mathbb E}[W_ng(W_n+1)]\Big]+{\mathbb E}[W_n \Delta g(W_n)].\label{7:exp0}
\end{equation}
Let now
$$W_{i,n}:=W_n-X_{N_{i,1}}$$
so that $X_i$ and $W_{i,n}$ are independent. Consider the following expression from \eqref{7:exp0}  
\begin{align}
{\mathbb E}(W_n){\mathbb E}[g(W_n+1)] -{\mathbb E}[W_ng(W_n+1)&=\sum_{i=1}^{n}{\mathbb E}(X_i){\mathbb E}[g(W_n+1)]-\sum_{i=1}^{n}{\mathbb E}[X_ig(W_n+1)]\nonumber\\
&=\sum_{i=1}^{n}{\mathbb E}(X_i){\mathbb E}[g(W_n+1)]-\sum_{i=1}^{n}{\mathbb E}[X_ig(W_n+1)]\nonumber\\
&~~~~~-\sum_{i=1}^{n}{\mathbb E}[X_ig(W_{i,n}+1)]+\sum_{i=1}^{n}{\mathbb E}[X_ig(W_{i,n}+1)]\nonumber\\
&=\sum_{i=1}^{n}{\mathbb E}(X_i){\mathbb E}[g(W_n+1)-g(W_{i,n}+1)]\nonumber\\
&~~~~~-\sum_{i=1}^{n}{\mathbb E}[X_i(g(W_n+1)-g(W_{i,n}+1))].\label{7:www56}
\end{align}
It can be seen that
\begin{align}
g(W_n+1)-g(W_{i,n}+1)&=g(W_{i,n}+X_{N_{i,1}}+1)-g(W_{i,n}+1)\nonumber\\
&=g(W_{i,n}+X_{N_{i,1}}+1)-g(W_{i,n}+X_{N_{i,1}})\nonumber\\
&~~~~~+g(W_{i,n}+X_{N_{i,1}})-g(W_{i,n}+X_{N_{i,1}}-1)\nonumber\\
&~~~~~~~\vdots\nonumber\\
&~~~~~+g(W_{i,n}+2)-g(W_{i,n}+1)\nonumber\\
&=\sum_{j=1}^{X_{N_{i,1}}}\Delta g(W_{i,n}+j).\label{7:sss}
\end{align}
Using \eqref{7:sss} in \eqref{7:www56}, we get
\begin{align}
{\mathbb E}(W_n){\mathbb E}[g(W_n+1)] -{\mathbb E}[W_ng(W_n+1)]&=\sum_{i=1}^{n}{\mathbb E}(X_i){\mathbb E}\Bigg[\sum_{j=1}^{X_{N_{i,1}}}\Delta g(W_{i,n}+j)\Bigg]\nonumber\\
&~~~~~-\sum_{i=1}^{n}{\mathbb E}\Bigg[X_i\sum_{j=1}^{X_{N_{i,1}}}\Delta g(W_{i,n}+j)\Bigg].\label{7:exp11}
\end{align}
Substituting \eqref{7:exp11} in \eqref{7:exp0}, we have
\begin{align}
{\mathbb E}[{\cal A}_Zg(W_n)]&=(1-b)\left\{\sum_{i=1}^{n}{\mathbb E}(X_i){\mathbb E}\Bigg[\sum_{j=1}^{X_{N_{i,1}}}\Delta g(W_{i,n}+j)\Bigg]-\sum_{i=1}^{n}{\mathbb E}\Bigg[X_i\sum_{j=1}^{X_{N_{i,1}}}\Delta g(W_{i,n}+j)\Bigg]\right\}\nonumber\\
&~~~~~+\sum_{i=1}^{n}{\mathbb E}[X_i \Delta g(W_n)].\label{7:exp2}
\end{align}
Note that ${\mathbb E}(Z)=a/(1-b)={\mathbb E}(W_n)=\sum_{i=1}^{n}\mathbb{E}(X_i)$. Therefore, from \eqref{7:mv},
\begin{align*}
\mathrm{Var}(Z)=\frac{a}{(1-b)^2}=\frac{1}{(1-b)}\sum_{i=1}^{n}\mathbb{E}(X_i). 
\end{align*}
Hence,
\begin{align*}
\tau&=\mathrm{Var}(W_n)-\mathrm{Var}(Z)=\sum_{i=1}^{n}{\mathbb E}(X_i X_{N_{i,1}})-\sum_{i=1}^{n}{\mathbb E}(X_i){\mathbb E}(X_{N_{i,1}})-\frac{1}{(1-b)}\sum_{i=1}^{n}\mathbb{E}(X_i).
\end{align*}
This implies
\vspace{-0.33cm}
\begin{align}
(1-b)\left\{\sum_{i=1}^{n}{\mathbb E}(X_i X_{N_{i,1}})-\sum_{i=1}^{n}{\mathbb E}(X_i){\mathbb E}(X_{N_{i,1}})\right\}-\sum_{i=1}^{n}\mathbb{E}(X_i)-\tau(1-b)=0.\label{7:tau1}
\end{align}
Next, define
$$V_{i,n}:=W_n-X_{N_{i,2}}$$
so that $X_{N_{i,1}}$ and $V_{i,n}$ are independent, and $X_i$ and $V_{i,n}$ are independent. From \eqref{7:tau1}, we get
\begin{align*}
(1&-b)\left\{\sum_{i=1}^{n}{\mathbb E}(X_i X_{N_{i,1}}\Delta g(V_{i,n}))-\sum_{i=1}^{n}{\mathbb E}(X_i){\mathbb E}(X_{N_{i,1}}\Delta g(V_{i,n}))\right\}\\
&~~~~~~~~~~~~~~~~~~~~~~~~~~~~~~~~~~~~~~~~~~~~~~~~-\sum_{i=1}^{n}\mathbb{E}(X_i\Delta g(V_{i,n}))-\tau(1-b)\mathbb{E}(\Delta g(V_{i,n}))=0.
\end{align*}
This is equivalent to
\begin{align}
(1&-b)\left\{\sum_{i=1}^{n}{\mathbb E}\Bigg[X_i \sum_{j=1}^{X_{N_{i,1}}}\Delta g(V_{i,n})\Bigg]-\sum_{i=1}^{n}{\mathbb E}(X_i){\mathbb E}\Bigg[\sum_{j=1}^{X_{N_{i,1}}}\Delta g(V_{i,n})\Bigg]\right\}\nonumber\\
&~~~~~~~~~~~~~~~~~~~~~~~~~~~~~~~~~~~~~~~~~~~~~~~~-\sum_{i=1}^{n}\mathbb{E}(X_i\Delta g(V_{i,n}))-\tau(1-b)\mathbb{E}(\Delta g(V_{i,n}))=0.\label{7:tau2}
\end{align}
Using \eqref{7:tau2} in \eqref{7:exp2}, we get
\begin{align}
{\mathbb E}[{\cal A}_Zg(W_n)]&=(1-b)\left\{\sum_{i=1}^{n}{\mathbb E}(X_i){\mathbb E}\Bigg[\sum_{j=1}^{X_{N_{i,1}}}\Delta g(W_{i,n}+j)\Bigg]-\sum_{i=1}^{n}{\mathbb E}\Bigg[X_i\sum_{j=1}^{X_{N_{i,1}}}\Delta g(W_{i,n}+j)\Bigg]\right\}\nonumber\\
&~~~~~+(1-b)\left\{\sum_{i=1}^{n}{\mathbb E}\Bigg[X_i \sum_{j=1}^{X_{N_{i,1}}}\Delta g(V_{i,n})\Bigg]-\sum_{i=1}^{n}{\mathbb E}(X_i){\mathbb E}\Bigg[\sum_{j=1}^{X_{N_{i,1}}}\Delta g(V_{i,n})\Bigg]\right\}\nonumber\\
&~~~~~+\sum_{i=1}^{n}{\mathbb E}[X_i \Delta g(W_n)]-\sum_{i=1}^{n}\mathbb{E}(X_i\Delta g(V_{i,n}))-\tau(1-b)\mathbb{E}(\Delta g(V_{i,n}))\nonumber\\
&=(1-b)\left\{\sum_{i=1}^{n}{\mathbb E}(X_i){\mathbb E}\Bigg[\sum_{j=1}^{X_{N_{i,1}}}(\Delta g(W_{i,n}+j)-\Delta g(V_{i,n}))\Bigg]\right.\nonumber\\
&~~~~~\left.-\sum_{i=1}^{n}{\mathbb E}\Bigg[X_i\sum_{j=1}^{X_{N_{i,1}}}(\Delta g(W_{i,n}+j)-\Delta g(V_{i,n}))\Bigg]\right\}\nonumber\\
&~~~~~+\sum_{i=1}^{n}{\mathbb E}[X_i (\Delta g(W_n)-\Delta g(V_{i,n}))]-\tau(1-b)\mathbb{E}(\Delta g(V_{i,n})).\label{7:tau3}
\end{align}
Note also that
\begin{align}
\Delta g(W_{i,n}+j)-\Delta g(V_{i,n})&=\Delta g(V_{i,n}+X_{N_{i,2}-N_{i,1}}+j)-\Delta g(V_{i,n})\nonumber\\
&=\sum_{k=1}^{X_{N_{i,2}-N_{i,1}}+j-1}\Delta^2 g(V_{i,n}+k).\label{7:tau4}
\end{align}
and
\begin{align}
\Delta g(W_n)-\Delta g(V_{i,n})&=\Delta g(V_{i,n}+X_{N_{i,2}})-\Delta g(V_{i,n})\nonumber\\
&=\sum_{k=1}^{X_{N_{i,2}}-1}\Delta^2 g(V_{i,n}+k).\label{7:tau5}
\end{align}
Substituting \eqref{7:tau4} and \eqref{7:tau5} in \eqref{7:tau3}, we have
\begin{align}
{\mathbb E}[{\cal A}_Zg(W_n)]&=(1-b)\left\{\sum_{i=1}^{n}{\mathbb E}(X_i){\mathbb E}\Bigg[\sum_{j=1}^{X_{N_{i,1}}}\sum_{k=1}^{X_{N_{i,2}-N_{i,1}}+j-1}\Delta^2 g(V_{i,n}+k)\Bigg]\right.\nonumber\\
&~~~~~\left.-\sum_{i=1}^{n}{\mathbb E}\Bigg[X_i\sum_{j=1}^{X_{N_{i,1}}}\sum_{k=1}^{X_{N_{i,2}-N_{i,1}}+j-1}\Delta^2 g(V_{i,n}+k)\Bigg]\right\}\nonumber\\
&~~~~~+\sum_{i=1}^{n}{\mathbb E}\Bigg[X_i \sum_{j=1}^{X_{N_{i,2}}-1}\Delta^2 g(V_{i,n}+j)\Bigg]-\tau(1-b)\mathbb{E}(\Delta g(V_{i,n})).\label{7:tau6}
\end{align}
Consider first
\begin{align}
{\mathbb E}\Bigg[X_i \sum_{j=1}^{X_{N_{i,2}}-1}\Delta^2 g(V_{i,n}+j)\Bigg]&=\mathbb{E}\left\{{\mathbb E}\Bigg[X_i \sum_{j=1}^{X_{N_{i,2}}-1}\Delta^2 g(V_{i,n}+j)\Bigg|X_{N_{i,2}}\Bigg]\right\}\nonumber\\
&=\mathbb{E}\left\{{\mathbb E}\left(X_i|X_{N_{i,2}}\right){\mathbb E}\Bigg[\sum_{j=1}^{X_{N_{i,2}}-1}\Delta^2 g(W_n-X_{N_{i,2}}+j)\Bigg|X_{N_{i,2}}\Bigg]\right\},\label{7:tau8}
\end{align}
since $X_i$ and $V_{i,n}$ are independent given $X_{N_{i,2}}$. Observe that
\begin{align}
\left|{\mathbb E}\Bigg[\sum_{j=1}^{X_{N_{i,2}}-1}\Delta^2 g(W_n-X_{N_{i,2}}+j)\Bigg|X_{N_{i,2}}=n_{i,2}\Bigg]\right|\le \|\Delta g\||n_{i,2}-1|D(W_n|X_{N_{i,2}}=n_{i,2}).\label{7:tau7}
\end{align}
Using \eqref{7:tau7} in \eqref{7:tau8}, we have
\begin{align}
\left|{\mathbb E}\Bigg[X_i \sum_{j=1}^{X_{N_{i,2}}-1}\Delta^2 g(V_{i,n}+j)\Bigg]\right|&\le \|\Delta g\|\mathbb{E}[X_i|X_{N_{i,2}}-1|D(W_n|X_{N_{i,2}})]\nonumber\\
&=\|\Delta g\|\mathbb{E}[X_i(X_{N_{i,2}}-1)D(W_n|X_{N_{i,2}})],\label{7:third}
\end{align}
since $X_iX_{N_{i,2}}\ge X_i\implies X_i(X_{N_{i,2}}-1)\ge 0$.\\
Consider next the following expression from \eqref{7:tau6}
\begin{align}
{\mathbb E}\Bigg[\sum_{j=1}^{X_{N_{i,1}}}\sum_{k=1}^{X_{N_{i,2}-N_{i,1}}+j-1}\hspace{-0.38cm}\Delta^2 g(V_{i,n}+k)\Bigg]&=\mathbb{E}\left\{ {\mathbb E}\Bigg[\sum_{j=1}^{X_{N_{i,1}}}\sum_{k=1}^{X_{N_{i,2}}-X_{N_{i,1}}+j-1}\hspace{-0.61cm}\Delta^2 g(W_n\hspace{-0.08cm}-\hspace{-0.08cm}X_{N_{i,2}}\hspace{-0.08cm}+\hspace{-0.08cm}k)\Bigg| X_{N_{i,1}},X_{N_{i,2}}\Bigg]\right\}.\label{7:tau9}
\end{align}
Then
\begin{align}
&\left|{\mathbb E}\Bigg[\sum_{j=1}^{X_{N_{i,1}}}\sum_{k=1}^{X_{N_{i,2}-N_{i,1}}+j-1}\Delta^2 g(W_n-X_{N_{i,2}}+k)\Bigg| X_{N_{i,1}}=n_{i,1},X_{N_{i,2}}=n_{i,2}\Bigg] \right|\nonumber\\
&~~~~~~~~~~~~~~~~~~~~~~~~~~~~~~~~~~~~\le \frac{\|\Delta g\|}{2}n_{i,1}|2n_{i,2}-n_{i,1}-1|D(W_n|X_{N_{i,1}}=n_{i,1},X_{N_{i,2}}=n_{i,2}).\label{7:tau10}
\end{align}
Using \eqref{7:tau10} in \eqref{7:tau9}, we get
\begin{align}
\left|{\mathbb E}\Bigg[\sum_{j=1}^{X_{N_{i,1}}}\sum_{k=1}^{X_{N_{i,2}-N_{i,1}}+j-1}\Delta^2 g(V_{i,n}+k)\Bigg]\right|& \le \frac{\|\Delta g\|}{2}\mathbb{E}[X_{N_{i,1}}|2X_{N_{i,2}}-X_{N_{i,1}}-1|D(W_n|X_{N_{i,1}},X_{N_{i,2}})]\nonumber\\
&=\frac{\|\Delta g\|}{2}\mathbb{E}[X_{N_{i,1}}(2X_{N_{i,2}}-X_{N_{i,1}}-1)D(W_n|X_{N_{i,1}},X_{N_{i,2}})],\label{7:first}
\end{align}
since $X_{N_{i,2}}X_{N_{i,1}}-X_{N_{i,1}}^2\ge 0$ and $X_{N_{i,2}}X_{N_{i,1}}-X_{N_{i,1}}\ge 0$ which imply $X_{N_{i,1}}(2X_{N_{i,2}}-X_{N_{i,1}}-1)\ge 0$. Similarly,
\begin{align}
\left|{\mathbb E}\Bigg[X_i\sum_{j=1}^{X_{N_{i,1}}}\sum_{k=1}^{X_{N_{i,2}-N_{i,1}}+j-1}\hspace{-0.55cm}\Delta^2 g(V_{i,n}+k)\Bigg]\right| \le \frac{\|\Delta g\|}{2}\mathbb{E}[X_i X_{N_{i,1}}(2X_{N_{i,2}}-X_{N_{i,1}}-1)D(W_n|X_{N_{i,1}},X_{N_{i,2}})].\label{7:second}
\end{align}
Finally, using \eqref{7:third}, \eqref{7:first} and \eqref{7:second} in \eqref{7:tau6}, the proof follows.
\end{proof}

\begin{remarks}\label{7:re1}
\begin{itemize}
\item[(i)] For $n\ge 1$, we can use \eqref{7:exp0} to obtain the following crude upper bound for $d_{TV}(W_n,Z)$.
\begin{align}
d_{TV}(W_n,Z)&\le(2 |1-b| \|g\|+\|\Delta g\|)\sum_{i=1}^{n}\mathbb{E}(X_i).\label{7:mainrss}
\end{align}
Note however that for $n\ge 6$, the bound given in \eqref{7:mainrs} would better than the one given in \eqref{7:mainrss}.
\item[(ii)] Assume $D(W_n|X_{N_{i,2}})\le c_i(n)$ then $D(W_n|X_{N_{i,1}},X_{N_{i,2}})\le c_i(n)$. Therefore, the bound \eqref{7:mainrs} becomes
\begin{align}
d_{TV}(W_n,Z)&\le \|\Delta g\|\Bigg\{\sum_{i=1}^{n}c_i(n)\Bigg[\frac{|1-b|}{2}\Big[\mathbb{E}(X_i)\mathbb{E}[X_{N_{i,1}}(2X_{N_{i,2}}-X_{N_{i,1}}-1)]\nonumber\\
&~~~~~+\mathbb{E}[X_iX_{N_{i,1}}(2X_{N_{i,2}}-X_{N_{i,1}}-1)]\Big]+\mathbb{E}[X_i (X_{N_{i,2}}-1)]\Bigg]+|\tau(1-b)|\Bigg\}\nonumber\\
&=:d_1(n). \label{7:lll}
\end{align}
Furthermore, let us denote ${\cal L}(W_{i,n}^*)={\cal L}(W_n|X_{N_{i,2}})$ and $Z_e=\{X_{2m}~|~m \in \{1,\dotsc,\floor{n/2}\}\}=(X_2,X_4,\dotsc,X_{2\floor{n/2}})$. Then, ${\cal L}(W_{i,n}^*|Z_e=z_e)$ can be written as sum of independent rvs, say $X_{j}^{(z_e)}$, for $j=1,2,\dotsc,n_{z_e}$. Therefore, using $(5.11)$ of R\"{o}llin \cite{RO2008}, we have
\begin{align}
D(W_{i,n}^*)\le \mathbb{E}[\mathbb{E}[D(W_{i,n}^*)|Z_e]]\le \mathbb{E}\left[\frac{2}{V_{i,Z_e}^{1/2}}\right],\label{7:ve}
\end{align}
where
\begin{equation}
V_{i,z_e}=\sum_{j=1}^{n_{z_e}}\min\left\{\frac{1}{2},1-D\big(X_j^{(z_e)}\big)\right\}.\label{7:re11}
\end{equation} 
On the other hand, let
\begin{align}
m^*=\left\{\begin{array}{ll}
\floor{n/2}+1, & \text{if $n$ is odd}\\
n/2, & \text{if $n$ is even}
\end{array}\right.\quad \text{and}\quad Z_o&=\{X_{2m-1}~|~m \in \{1,\dotsc,m^*\}\}.\label{7:m*}
\end{align}
Then, applying the similar argument as above, we get 
\begin{align}
D(W_{i,n}^*)\le \mathbb{E}[\mathbb{E}[D(W_{i,n}^*)|Z_o]]\le \mathbb{E}\left[\frac{2}{V_{i,Z_o}^{1/2}}\right],\label{7:vo}
\end{align}  
where $V_{i,z_o}$ is defined in a similar way as $V_{i,z_e}$. Hence, from \eqref{7:ve} and \eqref{7:vo}, we have
\begin{align*}
D(W_{i,n}^*)\le \min\left\{\mathbb{E}\left[\frac{2}{V_{i,Z_o}^{1/2}}\right],\mathbb{E}\left[\frac{2}{V_{i,Z_e}^{1/2}}\right]\right\}=c_i(n).
\end{align*}
Note that $c_i(n)=O(n^{-1/2})$ in general. For more details, we refer the reader to Section $5.3$ and Section $5.4$ of R\"{o}llin \cite{RO2008}.
\item[(iii)] The bound given in Theorem \ref{7:th1} can also be used for the case of matching the first two moments (i.e., $\tau=0$), whenever that is possible with the approximating distribution.
\item[(iv)] From \eqref{7:exp2}, it can be easily verified that in the case of first moment matching, we have
\begin{align*}
d_{TV}(W_n,Z)\le \|\Delta g\|\left\{|1-b|\sum_{i=1}^{n}[\mathbb{E}(X_i)\mathbb{E}(X_{N_{i,1}})+\mathbb{E}(X_iX_{N_{i,1}})]+\sum_{i=1}^{n}\mathbb{E}(X_i)\right\}=:d_2(n).
\end{align*}
and then we have $d_{TV}(W_n,Z)\le \min\{d_1(n),d_2(n)\}$, where $d_{1}(n)$ is defined in \eqref{7:lll}.
\item[(v)] Observe that if $\tau=0$ then the bound given in \eqref{7:mainrs} is of optimal order $O(n^{-1/2})$ and is comparable with the existing bounds (Theorems $3.1$ $3.3$, and $3.4$ for Poisson, negative binomial, and binomial, respectively) given by \v{C}ekanavi\v{c}ius and Vellaisamy \cite{CV2015} with the relaxation of the conditions $(3.1)-(3.3)$.
\end{itemize}
\end{remarks}

\noindent
Next, we give a bound for any rv $X\in \mathcal{P}_2$ in the following corollary.
\begin{corollary}\label{7:coor}
Assume that the conditions of Theorem \ref{7:th1} hold. Then, for any $X\in \mathcal{P}_2$ and $n\ge 6$,
\begin{align}
d_{TV}(W_n, X)&\le \min \left\{1,\frac{1}{a}\right\}\Bigg\{\sum_{i=1}^{n}c_i(n)\Bigg[\frac{|1-b|}{2}\Big[\mathbb{E}(X_i)\mathbb{E}[X_{N_{i,1}}(2X_{N_{i,2}}-X_{N_{i,1}}-1)]\nonumber\\
&~~~~~+\mathbb{E}[X_iX_{N_{i,1}}(2X_{N_{i,2}}-X_{N_{i,1}}-1)]\Big]+\mathbb{E}[X_i (X_{N_{i,2}}-1)]\Bigg]+|\tau(1-b)|\Bigg\}.\label{7:aab}
\end{align}
\end{corollary}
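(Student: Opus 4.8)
The plan is to obtain \eqref{7:aab} as a direct specialization of Theorem \ref{7:th1} to the subfamily $\mathcal{P}_2$, the advantage being that membership in $\mathcal{P}_2$ supplies a \emph{uniform} (that is, $k$-independent) bound on the first difference of the Stein solution. Since $\mathcal{P}_2\subset\mathcal{P}_1$, every $X\in\mathcal{P}_2$ satisfies the hypotheses of Theorem \ref{7:th1}, so the conclusion \eqref{7:mainrs} holds with $Z$ replaced by $X$, and in particular the reduced bound $d_1(n)$ of \eqref{7:lll} is available.

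First I would invoke Remark \ref{7:re1}(ii): under the stated assumption $D(W_n\,|\,X_{N_{i,2}})\le c_i(n)$ one also has $D(W_n\,|\,X_{N_{i,1}},X_{N_{i,2}})\le c_i(n)$. Because the random factors $X_{N_{i,1}}(2X_{N_{i,2}}-X_{N_{i,1}}-1)$ and $X_i(X_{N_{i,2}}-1)$ multiplying the conditional $D$-terms in \eqref{7:mainrs} are non-negative (these inequalities were already established within the proof of Theorem \ref{7:th1}), each conditional $D$-term can be pulled out of the expectation and dominated by $c_i(n)$. This yields precisely the bracketed moment expression of \eqref{7:lll}, still carrying the generic prefactor $\|\Delta g\|$.

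The only substantive step is to replace $\|\Delta g\|$ by the uniform bound available for $\mathcal{P}_2$. For $X\in\mathcal{P}_2$ we have $a,b\ge 0$, so \eqref{7:bddd} applies and gives $\|\Delta g\|=\sup_k|\Delta g(k)|\le 1\wedge \tfrac{1}{a}=\min\{1,1/a\}$. Substituting this into $d_1(n)$ produces the asserted bound \eqref{7:aab}. In fact there is no genuine obstacle here: the corollary is an immediate consequence of Theorem \ref{7:th1} together with the $\mathcal{P}_2$ difference bound \eqref{7:bddd}, and the only points requiring care are to cite the correct uniform estimate and to note that the non-negativity of the multiplying factors justifies replacing $|D(\cdot)|$ by $D(\cdot)$ before bounding it by $c_i(n)$.
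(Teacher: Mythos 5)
Your proposal is correct and follows exactly the route the paper intends: the corollary is the bound $d_1(n)$ of \eqref{7:lll} (obtained from Theorem \ref{7:th1} via Remark \ref{7:re1}(ii)) with the generic $\|\Delta g\|$ replaced by the uniform estimate $\min\{1,1/a\}$ from \eqref{7:bddd}, valid since $a,b\ge 0$ for members of $\mathcal{P}_2$. The paper gives no separate proof because this specialization is immediate, and your write-up supplies precisely the two ingredients it relies on.
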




\begin{example}
Assume that the conditions of Corollary \ref{7:coor} hold.  Moreover, let $Y\sim \text{Poi}(\lambda)$, the Poisson rv, so that $a=\lambda$ and $b=0$. Then, for $n\ge 6$,
\begin{align*}
d_{TV}(W_n, Y)&\le \min \left\{1,\frac{1}{\lambda}\right\}\Bigg\{\sum_{i=1}^{n}c_i(n)\Bigg[\frac{1}{2}\Big[\mathbb{E}(X_i)\mathbb{E}[X_{N_{i,1}}(2X_{N_{i,2}}-X_{N_{i,1}}-1)]\nonumber\\
&~~~~~+\mathbb{E}[X_iX_{N_{i,1}}(2X_{N_{i,2}}-X_{N_{i,1}}-1)]\Big]+\mathbb{E}[X_i (X_{N_{i,2}}-1)]\Bigg]+|\bar{\tau}_1|\Bigg\},
\end{align*}
where $\bar{\tau}_1=\mathrm{Var}(W_n)-\lambda$.
\end{example}

\begin{example}
Assume that the conditions of Corollary \ref{7:coor} hold.  Moreover, let $U\sim NB(\alpha,p)$, the negative binomial rv, so that $a=\alpha(1-p)$ and $b=1-p$. Then, for $n\ge 6$,
\begin{align*}
d_{TV}(W_n, U)&\le\min \left\{1,\frac{1}{\alpha(1-p)}\right\}\Bigg\{\sum_{i=1}^{n}c_i(n)\Bigg[\frac{p}{2}\Big[\mathbb{E}(X_i)\mathbb{E}[X_{N_{i,1}}(2X_{N_{i,2}}-X_{N_{i,1}}-1)]\nonumber\\
&~~~~~+\mathbb{E}[X_iX_{N_{i,1}}(2X_{N_{i,2}}-X_{N_{i,1}}-1)]\Big]+\mathbb{E}[X_i (X_{N_{i,2}}-1)]\Bigg]+|\bar{\tau}_2p|\Bigg\},
\end{align*}
where $\bar{\tau}_2=\mathrm{Var}(W_n)-\alpha (1-p)/p^2$.
\end{example}

\section{Applications to Runs}\label{7:APTK1K2R}
The distribution of runs and patterns has been applied successfully in many areas such as reliability theory, machine maintenance, quality control, and statistical testing, among many others. Also, it is not tractable if the underlying setup is arising from non-identical trials. So, the approximation of the runs has been studied by several researchers which includes, among others, Fu and Johnson \cite{FJ2009}, Godbole and Schaffner \cite{GS}, Kumar and Upadhye \cite{KU2019, KU2018}, Vellaisamy \cite{V}, and Wang and Xia \cite{WX}. In this section, we mainly focus on 2-runs and $(k_1,k_2)$-runs, however, the results can also be extended to other types of runs. 
\subsection{\texorpdfstring{$2$}--runs}\label{7:sub2r}
We consider here the setup similar to the one discussed in Chapter 5 of Balakrishnan and Koutras \cite[p.~166]{BK} for $2$-runs. Let $\eta_1,\eta_2,\dotsc,\eta_{n+1}$ be a sequence of independent Bernoulli trials with success probability $\mathbb{P}(\eta_i=1)=p_i=1-\mathbb{P}(\eta_i=0)$, for $i=1,2,\dotsc,n+1$. Assume that $p_i \le 1/2$, for all $i$, and
\begin{align}
R_n:=\sum_{i=1}^{n}X_i,\label{7:Wkk}
\end{align}
where $X_i=\eta_i \eta_{i+1}$, $1\le i \le n$, is a sequence of 1-dependent rvs. Observe that $R_n$ counts the number of overlapping success runs of length 2 in $n+1$ trials. It is easy to see that $\mathbb{E}X_i=\mathbb{P}(X_i=1)=p_i p_{i+1}:=a_1(p_i)$. Similarly, $\mathbb{E}(X_i X_{i+1})=p_i p_{i+1}p_{i+2}:=a_2(p_i)$ and $\mathbb{E}(X_i X_{i+1}X_{i+2})=p_i p_{i+1}p_{i+2}p_{i+3}:=a_3(p_i)$. Now, consider the first term in \eqref{7:mainrs}. Then
\begin{align}
\mathbb{E}(X_{N_{i,1}}(2 X_{N_{i,2}}-X_{N_{i,1}}-1))&=\mathbb{E}(X_{i-1}\hspace{-0.05cm}+\hspace{-0.05cm}X_i\hspace{-0.05cm}+\hspace{-0.05cm}X_{i+1})^2\hspace{-0.05cm}+\hspace{-0.05cm}\mathbb{E}[(X_{i-1}\hspace{-0.05cm}+\hspace{-0.05cm}X_i\hspace{-0.05cm}+\hspace{-0.05cm}X_{i+1})(2X_{i-2}\hspace{-0.05cm}+\hspace{-0.05cm}2X_{i+2}\hspace{-0.05cm}-\hspace{-0.05cm}1)]\nonumber\\
&=2\sum_{j=i-2}^{i+1}a_2(p_j)+2[a_1(p_{i-1})a_1(p_{i+1})+a_1(p_{i-2})(a_1(p_{i})+a_1(p_{i+1}))\nonumber\\
&~~~~~+a_1(p_{i+2})(a_1(p_{i-1})+a_1(p_{i}))]:=\bar{a}_1(p_i).\label{7:e44}
\end{align}
Similarly,
\begin{align}
\mathbb{E}(X_iX_{N{i,1}}(2 X_{N_{i,2}}-X_{N_{i,1}}-1))&=\mathbb{E}(X_i(X_{i-1}+X_i+X_{i+1})^2)\nonumber\\
&~~~~~+\mathbb{E}[X_i(X_{i-1}+X_i+X_{i+1})(2X_{i-2}+2X_{i+2}-1)]\nonumber\\
&=2a_1(p_i)(a_1(p_{i-2})+a_1(p_{i+2}))+2a_2(p_{i-1})(1+a_1(p_{i+2}))\nonumber\\
&~~~~~+2a_2(p_i)(1+a_1(p_{i-2}))+2\sum_{j=i-2}^{i}a_3(p_{j})=:\bar{a}_2(p_i).\label{7:e54}
\end{align}
and
\begin{align}
\mathbb{E}(X_{i}(X_{N_{i,2}}-1))&=\mathbb{E}\left(X_i\sum_{j=i-2}^{i+2}X_j\right)-\mathbb{E}(X_{i})\nonumber\\
&=a_1(p_i)\sum_{|j-i|=2}a_1(p_j)+\sum_{j=i-1}^{i}a_2(p_{j})=:\bar{a}_3(p_i). \label{7:e64}
\end{align}
Next, recall from Remarks \ref{7:re1} $(ii)$ with $W_n=R_n$ and $W_{i,n}^*=R_{i,n}^*$, ${\cal L}(R_{i,n}^*|Z_e=z_e)$ can be written as sum of independent rvs, say $X_j^{(z_e)}$, for $j\in\{1,2,\dotsc,n_{z_e}\}\cap\{\ell:|\ell-i|> 2\}=:{\cal C}_i$, for $i=1,2,\dotsc,n$. Note that $n_{z_e}=m^*$ defined in \eqref{7:m*} and $X_j^{(z_e)}=X_{2j-1}$ depends only on $X_{2j-2}$ $(2j\not\in \{2, i+4\})$ and $X_{2j}$ $(2j\neq i-2,~j \le \floor{n/2})$, $j\in {\cal C}_i$, for all values of $z_e$. So, for simplicity, let us write 
\begin{align*}
X_j^{(z_e)}=X_{2j-1}^{(x_{2j-2},x_{2j})}, \quad j\in{\cal C}_i,
\end{align*} 
where $x_{2j-2}$ and $x_{2j}$ are corresponding values of the rvs $X_{2j-2}$ and $X_{2j}$, respectively. Note that we use the same notation $D\big(X_{2j-1}^{(x_{2j-2},x_{2j})}\big)$, for $2j-1\in\{1,i-3,i+3,m^*\}$, while $X_{2j-1}$ depends either $X_{2j-2}$ or $X_{2j}$, not both. Therefore, from \eqref{7:re11}, we have
\begin{align*}
V_{i,z_e}=\displaystyle{\sum_{j\in{\cal C}_i}\min\left\{\frac{1}{2},1-D\left(X_{2j-1}^{(x_{2j-2},x_{2j})}\right)\right\}}.
\end{align*}
Note that $D\big(X_{2j-1}^{(1,1)}\big)=D\big(X_{2j-1}^{(1,0)}\big)=D\big(X_{2j-1}^{(0,1)}\big)=D\big(X_{2j-1}^{(0,0)}\big)=\frac{1}{2}$, for all $j\in \mathcal{C}_i$, except when $2j-1\in\{1,i-3,i+3,m^*\}$. For $2j-1\in\{1,i-3,i+3,m^*\}$, we have $D\big(X_{2j-1}^{(x_{2j-2},x_{2j})}\big)=p_{2j-1}$ or $p_{2j}\le 1/2\implies 1-D\big(X_{2j-1}^{(x_{2j-2},x_{2j})}\big)\ge 1/2$. Hence,  
\begin{align*}
V_{i,z_e}=\sum_{j\in{\cal C}_i}\min\left\{\frac{1}{2},1-D\left(X_{2j-1}^{(x_{2j-2},x_{2j})}\right)\right\}\ge \frac{1}{2}( m^*-3),\quad \text{for all values of $z_e$.}
\end{align*} 
Next, from \eqref{7:ve}, we have
\begin{align*}
D(R_{i,n}^*)\le \mathbb{E}\left[\frac{2}{V_{i,Z_e}^{1/2}}\right]\le 4 \left(m^*-3\right)^{-1/2},\quad \text{for all $i$}.
\end{align*}
Similarly,
\vspace{-0.24cm}
\begin{align*}
D(R_{i,n}^*)\le \mathbb{E}\left[\frac{2}{V_{i,Z_o}^{1/2}}\right]\le 4 \left(\floor{n/2}-3\right)^{-1/2},\quad \text{for all $i$}.
\end{align*}
Therefore,
\begin{align}
\bar{c}_i(n)=\bar{c}(n)&=4 \min\left\{\left(m^*-3\right)^{-1/2},\left(\floor{n/2}-3\right)^{-1/2}\right\}\le 4(m^*-3)^{-1/2},\quad \text{for all $i$}.\label{7:cine}
\end{align}
Hence, using \eqref{7:e44}, \eqref{7:e54}, \eqref{7:e64}, \eqref{7:cine}, and Theorem \ref{7:th1} and Remarks \ref{7:re1} $(ii)$, we obtain the following theorem. 
\begin{theorem}
Let $Z\in \mathcal{P}_1$ and $R_n$ be defined as in \eqref{7:Wkk}. Assume that ${\mathbb E}(Z)={\mathbb E}(R_n)$, and $\tau=\mathrm{Var}(R_n)-\mathrm{Var}(Z)$. Then, for $n \ge 8$ and $p_i\le 1/2$,
\begin{align}
d_{TV}(R_n,Z)&\le \|\Delta g\|\Bigg\{\bar{c}(n)\sum_{i=1}^{n}\Bigg[\frac{|1-b|}{2}\Big[a_1(p_i)\bar{a}_1(p_i)+\bar{a}_2(p_i)\Big]+\bar{a}_3(p_i)\Bigg]+|\tau(1-b)|\Bigg\}.\label{7:rruns}
\end{align}
\end{theorem}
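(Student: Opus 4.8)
The plan is to obtain \eqref{7:rruns} as a direct specialization of Theorem \ref{7:th1} to the $2$-runs setting, with essentially all the analytic ingredients already prepared in the discussion preceding the statement. First I would confirm that $R_n=\sum_{i=1}^{n}X_i$ with $X_i=\eta_i\eta_{i+1}$ is genuinely a sum of $1$-dependent rvs: since $X_i$ and $X_j$ share a common Bernoulli factor only when $|i-j|\le 1$, the sequence is $1$-dependent, so Theorem \ref{7:th1} applies verbatim with $W_n=R_n$. Under the first-moment matching ${\mathbb E}(Z)={\mathbb E}(R_n)$ and with $\tau=\mathrm{Var}(R_n)-\mathrm{Var}(Z)$, the general bound \eqref{7:mainrs} then holds, subject only to replacing the three conditional-$D$ expectations by explicit quantities.

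Next I would pass to the cleaner form \eqref{7:lll} furnished by Remarks \ref{7:re1}(ii): assuming the uniform conditional bound $D(R_n\,|\,X_{N_{i,2}})\le c_i(n)$, the monotonicity observation that conditioning on the extra coordinate $X_{N_{i,1}}$ cannot increase the smoothness gives $D(R_n\,|\,X_{N_{i,1}},X_{N_{i,2}})\le c_i(n)$ as well, so the $D$-factors may be pulled out of each expectation as the constant $c_i(n)$. This reduces the three sums to the unconditional moments ${\mathbb E}[X_{N_{i,1}}(2X_{N_{i,2}}-X_{N_{i,1}}-1)]$, ${\mathbb E}[X_iX_{N_{i,1}}(2X_{N_{i,2}}-X_{N_{i,1}}-1)]$ and ${\mathbb E}[X_i(X_{N_{i,2}}-1)]$. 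For $2$-runs these are precisely the quantities $\bar a_1(p_i)$, $\bar a_2(p_i)$ and $\bar a_3(p_i)$ evaluated in \eqref{7:e44}, \eqref{7:e54} and \eqref{7:e64} by expanding the windows $X_{N_{i,1}}=X_{i-1}+X_i+X_{i+1}$ and $X_{N_{i,2}}=\sum_{j=i-2}^{i+2}X_j$ and substituting the product formulas for $a_1,a_2,a_3$; the nonnegativity $X_i(X_{N_{i,2}}-1)\ge 0$ and $X_{N_{i,1}}(2X_{N_{i,2}}-X_{N_{i,1}}-1)\ge 0$ used in Theorem \ref{7:th1} holds automatically here since all $X_j\ge 0$.

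The substantive step, and the one I expect to be the main obstacle, is the uniform control $c_i(n)\le\bar c(n)$ of the conditional smoothness constants. Here I would follow the R\"{o}llin-type argument of Remarks \ref{7:re1}(ii): conditioning $R_{i,n}^*$ on the even-indexed block $Z_e$ (respectively the odd-indexed block $Z_o$) decouples the surviving odd-indexed (respectively even-indexed) summands into independent rvs $X_{2j-1}^{(x_{2j-2},x_{2j})}$, $j\in{\cal C}_i$, so that \eqref{7:ve}, \eqref{7:re11} and \eqref{7:vo} apply. The delicate point is the lower bound on $V_{i,z_e}$: for all but the boundary indices $2j-1\in\{1,i-3,i+3,m^*\}$ one has $D(X_{2j-1}^{(x_{2j-2},x_{2j})})=1/2$ exactly, contributing $1/2$ to $V_{i,z_e}$, while at the boundary indices the hypothesis $p_i\le 1/2$ forces $1-D(X_{2j-1}^{(x_{2j-2},x_{2j})})\ge 1/2$, so those terms do not degrade the sum. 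After discarding the $O(1)$ terms excised near index $i$ and at the boundary, this yields $V_{i,z_e}\ge\tfrac12(m^*-3)$ uniformly in $z_e$, hence $D(R_{i,n}^*)\le 4(m^*-3)^{-1/2}$ and, symmetrically, $4(\floor{n/2}-3)^{-1/2}$. Taking the minimum gives \eqref{7:cine}. This requires $m^*-3>0$ and $\floor{n/2}-3>0$, which is exactly why the statement imposes $n\ge 8$ rather than the $n\ge 6$ of Theorem \ref{7:th1}.

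Finally I would assemble the bound. Inserting the uniform estimate $c_i(n)\le\bar c(n)$ from \eqref{7:cine} (independent of $i$) together with the evaluated moments $a_1(p_i)\bar a_1(p_i)$, $\bar a_2(p_i)$ and $\bar a_3(p_i)$ into \eqref{7:lll}, and leaving the term $|\tau(1-b)|$ unchanged, collapses the right-hand side to $\|\Delta g\|\{\bar c(n)\sum_{i=1}^{n}[\tfrac{|1-b|}{2}(a_1(p_i)\bar a_1(p_i)+\bar a_2(p_i))+\bar a_3(p_i)]+|\tau(1-b)|\}$, which is precisely \eqref{7:rruns}. This completes the proof.
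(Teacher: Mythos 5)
Your proposal follows essentially the same route as the paper: it specializes Theorem \ref{7:th1} through Remarks \ref{7:re1}(ii), identifies the three moment expressions with $\bar a_1(p_i)$, $\bar a_2(p_i)$, $\bar a_3(p_i)$ from \eqref{7:e44}--\eqref{7:e64}, and controls the conditional smoothing constants uniformly via the R\"{o}llin even/odd conditioning argument to obtain $\bar c(n)$ in \eqref{7:cine}. Your explanation of why the hypothesis strengthens from $n\ge 6$ to $n\ge 8$ (so that $m^*-3$ and $\floor{n/2}-3$ stay positive) is exactly the paper's implicit reasoning, so the argument is the intended one.
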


\begin{remark}
Note that $\|\Delta g\|$ is of $O(n^{-1})$ in general, and hence, if $\mathrm{Var}(Z)=\mathrm{Var}(R_n)$ then the above bound become of order $O(n^{-1/2})$ and is comparable with the bounds given by Barbour and Xia \cite{BX}, Brown and Xia \cite{BX2001}, Daly {et al.} \cite{daly2012}, and Wang and Xia \cite{WX}. In fact, if $p_i=p$, for all $1 \le i\le n+1$, then $a(p)=p^2$, $\bar{a}_1(p)=8p^3+10p^4$, $\bar{a}_2(p)=4p^3+10p^4+4p^5$, and $\bar{a}_3(p)=2p^3+4p^4$, for all $1\le i \le n+1$, and hence, we have from \eqref{7:rruns},
\begin{align}
d_{TV}(R_n,Z)&\le n\|\Delta g\|\bar{c}(n)\left[\frac{|1-b|}{2}\left[4p^3+10p^4+12p^5+10p^6\right]+2(p^3+p^4)\right].\label{7:rruns1}
\end{align}
Now, let $Z\sim$NB$(\alpha,\bar{p})$, the negative binomial distribution with parameter $\alpha$ and $\bar{p}$. Then, $b=1-\bar{p}$ with $\bar{p}=1/(1+2p-3p^3)$ and $\|\Delta g\|\le \frac{1}{\alpha(1-\bar{p})}=\frac{1+2p-3p^2}{np^2}$, where $\alpha$ and $\bar{p}$ are obtained from first two moments matching condition, and hence, for $n \ge 8$ and $p\le 1/2$, we get
\begin{align}
d_{TV}(R_n,\text{NB}(\alpha,\bar{p}))\le \frac{4p}{(m^*-3)^{1/2}}[4+11p+4p^2-p^3].\label{7:iden}
\end{align} 
Also, from Theorem $4.2$ of Brown and Xia \cite{BX2001}, for $n\ge 2$ and $p<2/3$, we have
\begin{align}
d_{TV}(R_n,\text{NB}(\alpha,\bar{p}))\le \frac{32.2 p}{\sqrt{(n-1)(1-p)^3}}.\label{7:bx1}
\end{align}
For $n \ge 8$, we compare our bound with the one given in \eqref{7:bx1}, due to Brown and Xia \cite{BX2001}. Some numerical computations are given in the following table.

\begin{table}[H]
  \centering
  \caption{Comparison of bounds.}
  \begin{tabular}{cccccccccccc}
 \toprule
$n$ & $p$ & From \eqref{7:iden} & From \eqref{7:bx1} & $n$ & From \eqref{7:iden} & From \eqref{7:bx1} & $n$ & From \eqref{7:iden} & From \eqref{7:bx1}\\
\midrule
\multirow{3}{*}{20} & 0.05 & 0.344694 & 0.398900 & \multirow{3}{*}{25} & 0.303992 & 0.354924& \multirow{3}{*}{30} & 0.263265 & 0.322880\\
 & 0.07 & 0.506847 & 0.576571 & & 0.446997 & 0.513008&& 0.387111 & 0.466692\\
 & 0.09 & 0.683285 & 0.765878 & & 0.602601 & 0.681445&& 0.521867 & 0.619922\\
\midrule
\multirow{3}{*}{35} & 0.11 & 0.618205 & 0.723476 & \multirow{3}{*}{40} &  0.561012 & 0.675509& \multirow{3}{*}{50} &  0.493157 & 0.602650\\
 & 0.13 & 0.763728 & 0.884669 &  & 0.693072 & 0.826015 && 0.609244 & 0.736923\\
 & 0.15 & 0.919907 & 1.057010 &  & 0.834802 & 0.986930 && 0.733832 & 0.880482 \\
\bottomrule
\end{tabular}
\end{table}

\noindent
It is clear from the above table that our bound given in \eqref{7:iden} is better than the bound given in \eqref{7:bx1}, which is due to Brown and Xia \citep{BX2001}.
\end{remark}

\subsection{\texorpdfstring{$(k_1,k_2)$}--runs}\label{7:k1k2}
In this subsection, we consider the setup similar to Huang and Tsai \cite{HT} and Vellaisamy \cite{V}. Let $I_1,I_2,\dotsc$ be a sequence of independent Bernoulli trials. Here, we consider $I_1,I_2,\ldots,I_{(n+1)(k_1+k_2-1)}$ with success probability $\mathbb{P}(I_i=1)=p_i=1-\mathbb{P}(I_i=0)$, for $i=1,2,\dotsc,(n+1)(k_1+k_2-1)$. Define $m:=k_1+k_2-1$ and 
\begin{align}
Y_j:=(1-I_j)\dotsc (1-I_{j+k_1-1})I_{j+k_1}\dotsc I_{j+k_1+k_2-1},\quad j=1,2,\dotsc,nm.\label{7:k1}
\end{align}
Note that $Y_1,Y_2,\dotsc,Y_{nm}$ is a sequence of $m$-dependent rvs. Now, let us also define
\begin{align*}
X_i:=\sum_{j=(i-1)m+1}^{im}Y_j,\quad \text{for $i=1,2,\dotsc,n$}.
\end{align*} 
Then $X_1,X_2,\dotsc,X_n$ become a sequence of $1$-dependent rvs, that is, we reduced $m$-dependent to $1$-dependent sequence of rvs. From \eqref{7:k1}, it is clear that $Y_i$, for $i=1,2,\dotsc,nm$, are Bernoulli rvs and if $Y_i=1$ then $Y_j=0$, for all $j$ such that $|j-i|\le m$ and $j \neq i$. Therefore, $X_i$, $i=1,2,\dotsc,n$, are also Bernoulli rvs. Next, let
\begin{align}
R_n^\prime=\sum_{i=1}^{nm}Y_i=\sum_{i=1}^{n}X_i,\label{7:Wk}
\end{align} 
the sum of the corresponding $1$-dependent rvs ($X_i$'s). The distribution of $R_n^\prime$ is called the distribution of $(k_1,k_2)$-runs or modified distribution of order $k$ or distribution of order $(k_1,k_2)$. For more details. we refer the reader to Balakrishnan and Koutras \cite{BK}, Huang and Tsai \cite{HT},  Upadhye and Kumar \cite{KU2018}, Vellaisamy \cite{V} and reference therein.\\
Next, note that 
\begin{align*}
\mathbb{E}(Y_j)=\mathbb{P}(Y_j=1)=(1-p_j)\dotsc(1-p_{j+k_1-1})p_{j+k_1}\dotsc p_{j+k_1+k_2-1}=:a(p_j),~~~ \text{for } j=1,2,\dotsc,nm,
\end{align*} and hence 
\begin{align}
\mathbb{E}X_i=\sum_{j=(i-1)m+1}^{im}\mathbb{E}Y_j=\sum_{j=(i-1)m+1}^{im}a(p_j)=:a^*(p_i),\quad \text{for $i=1,2,\dotsc,n$}.\label{7:e1}
\end{align}
Also,
\begin{align}
\mathbb{E}(X_i X_{i+1})&=\sum_{\ell_1=(i-1)m+1}^{im-1}a(p_{\ell_1})\sum_{\ell_2=\ell_1+m+1}^{(i+1)m}a(p_{\ell_2})+\sum_{\ell_1=im+2}^{(i+1)m}a(p_{\ell_1})\sum_{\ell_2=(i-1)m+1}^{\ell_1-m-1}a(p_{\ell_2})=:a^*(p_ip_{i+1}).\label{7:e2}
\end{align}
and
\begin{align}
\mathbb{E}(X_{i}X_{i+1}X_{i+2})&=\sum_{\ell_1=(i-1)m+1}^{im-1}a(p_{\ell_1})\sum_{\ell_2=\ell_1+m+1}^{(i+1)m-1}a(p_{\ell_2})\sum_{\ell_3=\ell_2+m+1}^{(i+2)m}a(p_{\ell_3})\nonumber\\
&~~~~~+\sum_{\ell_1=im+2}^{(i+1)m-1}a(p_{\ell_1})\sum_{\ell_2=(i-1)m+1}^{\ell_1-m-1}a(p_{\ell_2})\sum_{\ell_3=\ell_1+m+1}^{(i+2)m}a(p_{\ell_3})\nonumber\\
&~~~~~+\sum_{\ell_1=(i+1)m+3}^{(i+2)m}a(p_{\ell_1})\sum_{\ell_2=im+2}^{\ell_1-m-1}a(p_{\ell_2})\sum_{\ell_3=(i-1)m+1}^{\ell_2-m-1}a(p_{\ell_3})\nonumber\\
&=:a^*(p_{i}p_{i+1}p_{i+2}).\label{7:e3}
\end{align}
Using the steps similar to \eqref{7:e44}-\eqref{7:e64} with \eqref{7:e1}, \eqref{7:e2} and \eqref{7:e3}, we have
\begin{align}
\mathbb{E}(X_{N_{i,1}}(2 X_{N_{i,2}}-X_{N_{i,1}}-1))&\le 2\sum_{j=i-2}^{i+1}a^*(p_jp_{j+1})+2[a^*(p_{i-1})a^*(p_{i+1})\nonumber\\
&~~~~~+a^*(p_{i-2})(a^*(p_{i})+a^*(p_{i+1}))+a^*(p_{i+2})(a^*(p_{i-1})+a^*(p_{i}))]\nonumber\\
&:=a_1^*(p_i),\label{7:e4}\\
\mathbb{E}(X_iX_{N_{i,1}}(2 X_{N_{i,2}}-X_{N_{i,1}}-1))&\le 2a^*(p_i)(a^*(p_{i-2})+a^*(p_{i+2}))+2a^*(p_{i-1}p_i)(1+a^*(p_{i+2}))\nonumber\\
&~~~~~+2a^*(p_ip_{i+1})(1+a^*(p_{i-2}))+2\sum_{j=i-2}^{i}a^*(p_{j}p_{j+1}p_{j+2})\nonumber\\
&=:a_2^*(p_i).\label{7:e5}
\end{align}
and
\begin{align}
\mathbb{E}(X_{i}(X_{N_{i,2}}-1))&\le a^*(p_i)\sum_{|j-i|=2}a^*(p_j)+\sum_{j=i-1}^{i}a^*(p_{j}p_{j+1})=:a_3^*(p_i). \label{7:e6}
\end{align}
Next, from Subsection \ref{7:sub2r}, following the discussion about Remarks \ref{7:re1} $(ii)$, we have
$$V_{i,z_e}=\displaystyle{\sum_{j\in{\cal C}_i}\min\left\{\frac{1}{2},1-D\left(X_{2j-1}^{(x_{2j-2},x_{2j})}\right)\right\}}.$$
Note that
\begin{align}
D\left(X_{2j-1}^{(x_{2j-2},x_{2j})}\right)&=\frac{1}{2}\left[\mathbb{P}\left(X_{2j-1}^{(x_{2j-2},x_{2j})}=0\right)+\left|\mathbb{P}\left(X_{2j-1}^{(x_{2j-2},x_{2j})}=0\right)-\mathbb{P}\left(X_{2j-1}^{(x_{2j-2},x_{2j})}=1\right)\right|\right]\nonumber\\
&\le \frac{1}{2}\left[1+\mathbb{P}\left(X_{2j-1}^{(x_{2j-2},x_{2j})}=0\right)\right]\nonumber\\
& \le \frac{1}{2}[1+\bar{a}(p_{2j-1})],\label{7:ww}
\end{align}
where 
\begin{align}
\bar{a}(p_{2j-1})=\displaystyle{\max_{0\le x_{2j-2},~x_{2j}\le 1}\mathbb{P}\left(X_{2j-1}^{(x_{2j-2},x_{2j})}=0\right)}.\label{7:apj}
\end{align} 
Next, using \eqref{7:ww}, we have
\begin{align*}
 \frac{1}{V_{i,z_e}^{1/2}}\le \left( \frac{1}{2}\min\left\{1,\sum_{j\in{\cal C}_i}(1-\bar{a}(p_{2j-1}))\right\}\right)^{-1/2},~\text{for all $z_e$}
\end{align*}
Therefore, from \eqref{7:ve}, we have
\begin{align*}
D(R_{i,n}^{\prime^*})\le \mathbb{E}\left[\frac{2}{V_{i,Z_e}^{1/2}}\right]\le 2 \left( \frac{1}{2}\min\left\{1,\sum_{j\in{\cal C}_i}(1-\bar{a}(p_{2j-1}))\right\}\right)^{-1/2}=:V_{i,e}^*.
\end{align*}
Similarly,
\begin{align*}
D(R_{i,n}^{\prime^*})\le \mathbb{E}\left[\frac{2}{V_{i,Z_o}^{1/2}}\right]\le 2 \left( \frac{1}{2}\min\left\{1,\sum_{j\in{\cal D}_i}(1-\bar{a}(p_{2j-1}))\right\}\right)^{-1/2}=:V_{i,o}^*,
\end{align*}
where ${\cal D}_i=\{1,2,\dotsc,\floor{n/2}\}\cap\{\ell:|\ell-i|>2\}$.
Therefore,
\begin{align}
c_i^*(n)=\min\{V_{i,e}^*,V_{i,o}^*\}.\label{7:cin}
\end{align}
Using \eqref{7:e1}, \eqref{7:e4}-\eqref{7:e6}, \eqref{7:cin}, Theorem \ref{7:th1} and Remarks \ref{7:re1} $(ii)$, the following result is established. 
\begin{theorem}
Let $Z\in \mathcal{P}_1$ and $R_n^\prime$ be defined as in \eqref{7:Wk}. Assume that ${\mathbb E}(Z)={\mathbb E}(R_n^\prime)$, and $\tau=\mathrm{Var}(R_n^\prime)-\mathrm{Var}(Z)$. Then, for $n \ge 3m$, $a(p_{2j-1})\le 1/3$ defined in \eqref{7:apj},
\begin{align*}
d_{TV}(R_n^\prime,Z)&\le \|\Delta g\|\Bigg\{\sum_{i=1}^{n}c_i^*(n)\Bigg[\frac{|1-b|}{2}\Big[a^*(p_i)a_1^*(p_i)+a_2^*(p_i)\Big]+a_3^*(p_i)\Bigg]+|\tau(1-b)|\Bigg\}.\end{align*}
\end{theorem}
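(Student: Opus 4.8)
The plan is to apply Theorem \ref{7:th1}, specialized through Remarks \ref{7:re1}~$(ii)$, directly to $W_n=R_n^\prime$, since by the construction preceding \eqref{7:Wk} the sum $R_n^\prime=\sum_{i=1}^n X_i$ is a sum of $1$-dependent Bernoulli random variables. Concretely, the bound \eqref{7:lll} requires, for each $i$, upper bounds on the three expectation factors $\mathbb{E}[X_{N_{i,1}}(2X_{N_{i,2}}-X_{N_{i,1}}-1)]$, $\mathbb{E}[X_iX_{N_{i,1}}(2X_{N_{i,2}}-X_{N_{i,1}}-1)]$, and $\mathbb{E}[X_i(X_{N_{i,2}}-1)]$, together with a uniform bound $c_i^*(n)$ on the quantities $D(R_n^\prime\mid X_{N_{i,2}})$ and $D(R_n^\prime\mid X_{N_{i,1}},X_{N_{i,2}})$. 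Once these four ingredients are in hand, substituting them into \eqref{7:lll} yields the claimed inequality, with the role of $c_i(n)$ played by $c_i^*(n)$.

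First I would record the low-order block moments. Expanding the products $(1-I_j)\cdots I_{j+k_1+k_2-1}$ and using independence of the trials gives $\mathbb{E}X_i=a^*(p_i)$ as in \eqref{7:e1}, and the two- and three-fold quantities $\mathbb{E}(X_iX_{i+1})$ and $\mathbb{E}(X_iX_{i+1}X_{i+2})$ as in \eqref{7:e2} and \eqref{7:e3}; the shifted summation ranges there simply encode that two (or three) blocks can simultaneously be nonzero only when their defining patterns do not overlap. With these moments, the three expectation factors are expanded exactly as in the $2$-runs computation \eqref{7:e44}--\eqref{7:e64}, now with the block quantities $a^*(\cdot)$ in place of $a_1,a_2,a_3$; collecting terms and discarding the negative contributions (as justified there by the nonnegativity of $X_{N_{i,1}}(2X_{N_{i,2}}-X_{N_{i,1}}-1)$ and $X_i(X_{N_{i,2}}-1)$) produces the upper bounds $a_1^*(p_i)$, $a_2^*(p_i)$, $a_3^*(p_i)$ of \eqref{7:e4}--\eqref{7:e6}. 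This step is routine but bookkeeping-heavy.

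The main obstacle is the bound $c_i^*(n)$ on the $D$-factors, and this is where the hypotheses $n\ge 3m$ and $\bar a(p_{2j-1})\le 1/3$ of \eqref{7:apj} enter. Following Remarks \ref{7:re1}~$(ii)$, I would condition on the even-indexed sub-collection $Z_e$ (and symmetrically on the odd-indexed $Z_o$): by $1$-dependence the surviving summands $X_{2j-1}^{(x_{2j-2},x_{2j})}$, indexed by the sets $\mathcal{C}_i$ (resp.\ $\mathcal{D}_i$), become conditionally independent, so Röllin's bound \eqref{7:ve} (resp.\ \eqref{7:vo}) applies. The delicate part is lower-bounding the conditional quantity $V_{i,z_e}$ in \eqref{7:re11}: estimating each summand via \eqref{7:ww}--\eqref{7:apj} gives $1-D(X_{2j-1}^{(x_{2j-2},x_{2j})})\ge (1-\bar a(p_{2j-1}))/2$, and the constraint $\bar a(p_{2j-1})\le 1/3$ guarantees each retained term contributes a fixed positive amount, while $n\ge 3m$ ensures that deleting the $O(1)$ neighbouring blocks around index $i$ still leaves $\Theta(n)$ surviving indices. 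Taking the minimum over the two conditionings then yields $c_i^*(n)$ as in \eqref{7:cin}, of the optimal order $O(n^{-1/2})$ claimed in Remarks \ref{7:re1}~$(ii)$.

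Finally, I would assemble the pieces: substituting $a_1^*(p_i)$, $a_2^*(p_i)$, $a_3^*(p_i)$ for the three expectation factors and $c_i^*(n)$ for the conditional $D$-terms into the bound \eqref{7:lll} of Remarks \ref{7:re1}~$(ii)$ (applied with $W_n=R_n^\prime$) reproduces exactly the stated inequality. The only genuinely new analytic content beyond Theorem \ref{7:th1} is the variance-type lower bound underlying $c_i^*(n)$; everything else is the specialization of the general moment expressions in \eqref{7:mainrs} to the $(k_1,k_2)$-run block structure, in complete parallel with the $2$-runs derivation.
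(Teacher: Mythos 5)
Your proposal follows exactly the paper's route: the paper likewise obtains the block moments \eqref{7:e1}--\eqref{7:e3}, bounds the three expectation factors by $a_1^*(p_i),a_2^*(p_i),a_3^*(p_i)$ in parallel with the $2$-runs computation \eqref{7:e44}--\eqref{7:e64}, derives $c_i^*(n)$ from R\"ollin's inequality under the even/odd conditioning via \eqref{7:ww}--\eqref{7:cin}, and then substitutes into Theorem \ref{7:th1} together with Remarks \ref{7:re1}~$(ii)$. The argument is correct and essentially identical to the paper's.
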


\begin{remark}
Note that the above bound is comparable with the existing bounds given by Upadhye and Kumar \cite{KU2018} and order improvement over the bounds given by Barbour {et al.} \cite{BHJ}, Godbole \cite{G1993}, Godbole and Schaffner \cite{GS} (with $k_1=1$), and Vellaisamy \cite{V}. Also, note that we have used a slightly different form of $(k_1,k_2)$-runs, that is, we use $I_1,I_2,\dotsc,I_{(n+1)(k_1+k_2-1)}$ instead of $I_1,I_2,\dotsc,I_n$, so that $X_1,X_2,\dotsc,X_n$ become a sequence of 1-dependent rvs and we can directly apply our result. However, we can also use some other forms and derive the corresponding results.
\end{remark}

\singlespacing
\bibliographystyle{PV}
\bibliography{PSDFMD} 
\end{document}